\newcommand {\bR}{\mathbb R}
\newcommand {\bN}{\mathbb N}
\newcommand {\bZ}{\mathbb Z}
\newcommand {\bC}{\mathbb C}
\newcommand {\bQ}{\mathbb Q}
\newcommand {\Id}{\operatorname{Id}}
\newcommand {\bk}{\mathbf{k}}
\newcommand {\bs}{\mathbf{s}}
\newcommand {\be}{\mathbf{1}}
\newcommand{\cA}{\mathcal{A}}
\newcommand{\cH}{\mathcal{H}}
\newcommand{\wt}{\operatorname{wt}}
\newcommand {\RE}{\operatorname{Re}}
\newcommand{\cS}{\mathcal S}
\newcommand{\fh}{\mathfrak{h}}
\newcommand{\oz}{\overline{\zeta}}
\newcommand {\cM}{\mathcal M}
\newcommand {\z}{\zeta}
\newtheorem{theorem}{Theorem}[section]
\newtheorem {lemma}[theorem]{Lemma}
\newtheorem {corollary}[theorem]{Corollary}
\newtheorem {example}[theorem]{Example}
\newtheorem {remark}[theorem]{Remark}
\renewcommand\@biblabel[1]{#1.}
\begin{document}

%%%%%%%%%%%%%%%%%%%%%%%%%%%%%%%%

\title[$q$-regularised multiple zeta values]{renormalisation of $q$-regularised multiple zeta values}

%%%%%%%%%%%%%%%%%%%%%%%%%%%%%%%%

\author[K.~Ebrahimi-Fard]{Kurusch Ebrahimi-Fard}
\address{{ICMAT,
		C/Nicol\'as Cabrera, no.~13-15, 28049 Madrid, Spain}.
		{\tiny{On leave from UHA, Mulhouse, France.}}}
         \email{kurusch@icmat.es, kurusch.ebrahimi-fard@uha.fr}         
         \urladdr{www.icmat.es/kurusch}

\author[D.~Manchon]{Dominique Manchon}
\address{Univ. Blaise Pascal,
         C.N.R.S.-UMR 6620, 3 place Vasar\'ely, CS 60026, 63178 Aubi\`ere, France}       
      \email{manchon@math.univ-bpclermont.fr}
      \urladdr{http://math.univ-bpclermont.fr/~manchon/}

\author[J.~Singer]{Johannes Singer}
\address{Department Mathematik, 
		Friedrich-Alexander-Universit\"at Erlangen-N\"urnberg, 
		Cauerstra\ss e 11,  
		91058 Erlangen, Germany}
\email{singer@math.fau.de}
\urladdr{www.math.fau.de/singer}

\keywords{multiple zeta values, renormalisation, Hopf algebra, $q$-analogues, quasi-shuffle relation}
\subjclass[2010]{11M32,16T05}

\date{\today}

\maketitle

%%%%%%%%%%%%%%%%%%%%%%%%%%%%%%%%

\begin{abstract}
We consider a particular one-parameter family of $q$-analogues of multiple zeta values. The intrinsic $q$-regularisation permits an extension of these $q$-multiple zeta values to negative integers. Renormalised multiple zeta values satisfying the quasi-shuffle product are obtained using an Hopf algebraic Birkhoff factorisation together with minimal subtraction. 
\end{abstract}

%%%%%%%%%%%%%%%%%%%%%%%%%%%%%%%%

\section{Introduction}
\label{sec:intro}

For $\RE(s)>1$ the \emph{Riemann zeta function} is defined by 
\begin{align*}
 \zeta(s):=\sum_{n\geq 1} \frac{1}{n^s}, 
\end{align*}
which can be meromorphically continued to $\bC$ with a simple pole in $s=1$. It is well known that for even integers $k \in 2 \mathbb{N}$ we have
\begin{align*}
 \z(k)=  -\frac{(2\pi i)^k B_k}{2k!} 
\end{align*}
and for $k\in \bN_0$
\begin{align}
\label{eq:mero1}
 \z(-k) = -\frac{B_{k+1}}{k+1},  
\end{align}
where $B_k\in \bQ$ are the Bernoulli numbers defined by the generating series 
\begin{align}
\label{eq:Bernoulli}
 \frac{te^t}{e^t-1}= \sum_{k\geq 0} B_k\frac{t^k}{k!}. 
\end{align}

For $\bs:=(s_1,\ldots,s_n)\in \bC^n$ with $\sum_{i=1}^j {\RE(s_i)}>j$ ($j=1,\ldots,n$) \emph{multiple zeta values} (MZVs) are defined by the nested series 
\begin{align}\label{eq:MZV}
 \z(s_1,\ldots,s_n):=\sum_{m_1>\cdots >m_n>0} \frac{1}{m_1^{s_1} \cdots m_n^{s_n}}, 
\end{align}
which are natural generalisations of the Riemann zeta function (\cite{Krattenthaler07}). We call $n$ the \emph{depth} and $|\bs|:=s_1+\cdots+s_n$ the \emph{weight} of $\bs$. Usually MZVs are studied at positive integers $k_1,\ldots,k_n$ with $k_1\geq 2$. Especially the $\bQ$-vector space 
\begin{align*}
	\cM:=\langle \zeta(\bk)\colon \bk\in \bN^n, k_1\geq 2, n\in \bN_0 \rangle_{\bQ}
\end{align*}
is of great interest (\cite{Brown12,Zagier12}) since it is an algebra with two non-compatible products -- the \emph{shuffle} product and the \emph{quasi-shuffle} product. The latter is induced by the defining series \eqref{eq:MZV} applying the product rule of power series. The shuffle product is an application of the integration by parts formula for iterated Chen integrals using an appropriate integral representation of MZVs in terms of the 1-forms $\omega_j^{(1)}:=\frac{dt_j}{1-t_j}$ and $\omega_j^{(0)}:=\frac{dt_j}{t_j}$. Indeed, using the standard notation, $|\bk_{(j)}|:=k_1+\cdots+k_j$, $j\in\{1,\ldots,n\}$, one can show that \eqref{eq:MZV} can be written as
\begin{equation}
\label{eq:MZVint}
         \zeta(k_1,\ldots,k_n)=\int_{0}^{1} \bigg( \prod_{j=1}^{|\bk_{(1)}|-1}
                                \omega_j^{(0)}\bigg) \omega_{|\bk_{(1)}|}^{(1)} \cdots \bigg( \prod_{j=|\bk_{(n-1)}|+1}^{|\bk_{(n)}|-1}
                                \omega_j^{(0)}\bigg) \omega_{|\bk_{(n)}|}^{(1)} .
\end{equation}
This so-called double-shuffle structure gives rise to a great and intriguing variety of linear relations among MZVs (\cite{Hoffman97,Ihara06}), for instance the identity $\zeta(4)=4\zeta(3,1)$. 

The function $\z$ is meromorphic on $\bC^n$ with the subvariety $\cS_n$ of singularities, where 
\begin{align}\label{eq:meroMZV}
	\cS_n:=\left\{(s_1,\ldots,s_n)\in \bC^n\colon \begin{array}{l}
	s_1  = 1
	\text{~~or~~}
	s_1+s_2  = 2,1,0,-2,-4, \ldots 
	\text{~~or~~} \\  
	s_1+ \cdots +s_j  \in \bZ_{\leq j}~\text{for}~j=3,\ldots,n \end{array}  \right\}.
\end{align}
In \cite{Akiyama01a} it was shown that for $k_1,k_2\in \bN$ with $k_1+k_2$ odd we have 
\begin{align}\label{eq:mero2}
 \z(-k_1,-k_2) = \frac{1}{2}\frac{B_{k_1+k_2+1}}{k_1+k_2+1}.
\end{align}
In contrast to the dimension one case the meromorphic continuation of MZVs does not provide sufficient information for arbitrary negative integer arguments. This lack of data resulted in an interesting phenomenon related to the program commonly known as renormalisation of multiple zeta values at negative arguments. Using the fact that the quasi-shuffle product of MZVs can be abstracted to a quasi-shuffle Hopf algebra \cite{Hoffman00}, which is connected and graded, the authors in \cite{Guo08} and \cite{Manchon10} proposed renormalisation procedures that would allow the extension of any MZV to -- strictly -- negative arguments while preserving the quasi-shuffle product. Both approaches are based on the renormalisation Hopf algebra discovered by Connes and Kreimer \cite{Connes00,Connes01}, which permits to formulate the so-called BPHZ subtraction method in terms of an algebraic Birkhoff decomposition of regularised Hopf algebra characters. See \cite{Manchon08,Manchon10b,Panzer12} for introductions and more details. However, the two approaches, \cite{Guo08} and \cite{Manchon10}, propose different ways, both elaborated and sophisticated, of regularising MZVs at negative arguments. The resulting methods for constructing renormalised MZVs attribute different values to MZVs at negative arguments, while preserving the quasi-shuffle product. We remark that in \cite{Ebrahimi15} the authors presented a way to renormalise MZVs while preserving the shuffle product. Again, this approach is also based on the use of an algebraic Birkhoff decomposition of characters defined on a connected graded Hopf algebra, which is, however, different from the ones used in \cite{Guo08} and \cite{Manchon10}. It is an interesting question of how to relate the resulting zoo of different values for MZVs at negative arguments, based on renormalisation methods, which share the same algebraic workings in terms of an Hopf algebraic Birkhoff decomposition, but differ by applying distinct regularisation schemes.    

Through the works \cite{Castillo13a,Castillo13b} it became apparent that certain $q$-analogues of MZVs may exhibit double shuffle relations as well as an intrinsic regularisation that permits an extension of the resulting $q$-MZVs to negative arguments. A natural question that arises from this observation is whether such a $q$-regularisation can be used to renormalise MZVs while preserving -- one of -- the original shuffle-type products. In \cite{Ebrahimi15} this was shown to hold with respect to the shuffle product, for a certain $q$-analogue of MZVs known as the Ohno--Okuda--Zudilin (OOZ) model \cite{Ohno12}. In the light of the fact that there exist a variety of different $q$-analogues of MZVs, one may wonder whether a systematic approach is feasible, that characterises models of $q$-MZVs under the aspect of whether they provide a proper $q$-regularisation, which would permit a renormalisation of MZVs while preserving algebraic properties.             

The agenda of this note proposes a step in the aforementioned direction. Starting from a particular $q$-analogue of MZVs, its intrinsic $q$-regularisation permits its extension to negative arguments. The resulting $q$-MZVs at negative arguments are still represented as series of nested sums, which, moreover, exhibit a natural quasi-shuffle product. This fact allows us to present a rather transparent and simple construction of renormalised MZVs preserving the quasi-shuffle product. Moreover, our approach is enhanced by providing a one-parameter extension of this $q$-analogue, which results in infinitely many extensions rolled into a single approach. Regarding renormalisation of $q$-MZVs we remark that Zhao's approach in \cite{Zhao08} is rather different from the point of view just presented. It extended the approach of \cite{Guo08} to a particular $q$-analog of MZVs, without attributing any regularisation properties to the $q$-parameter itself.    

\smallskip

The aim of the note is as follows: 

\begin{itemize}
 \item We provide a one-parameter family of extensions of MZVs to negative integer arguments. Our approach is based on applying the renormalisation procedure of Connes and Kreimer combined with minimal subtraction to a particular $q$-regularisation of MZVs.  

 \item We show that the extended MZVs satisfy the quasi-shuffle product of MZVs, and coincide with the meromorphic continuation of MZVs whenever it is defined. 

 \item We exemplify that besides pure rational extensions of MZVs also irrational or complex values can appear as renormalised MZVs for certain deformation parameters, although the values given by the meromorphic continuation are always rational.
\end{itemize}
 
\smallskip

The note is organised as follows. In Section \ref{sec:qMZVs} we introduce a $q$-analogue of MZVs ($q$-MZVs) which is the starting point of this work. Section \ref{sec:quasi} is devoted to a review of the well known aspects concerning the quasi-shuffle Hopf algebra. The main results are stated in Section \ref{sec:main} and the proofs are given in Section \ref{sec:proofmain}. Finally, in Section \ref{sec:num} we provide some explicit numerical examples.

\vspace{0.4cm}

\noindent {\bf{Acknowledgements}}: The first author is supported by the Ram\'on y Cajal research grant RYC-2010-06995 from the Spanish government. He acknowledges support from the Spanish government under project MTM2013-46553-C3-2-P. The second author is supported by Agence Nationale de la Recherche (projet CARMA). We thank the referee for a subtle report, which helped improving the paper.

%%%%%%%%%%%%%%%%%%%%%%%%%%%%%%%%

\section{$q$-analogues of MZVs}
\label{sec:qMZVs}

 In this section we introduce the $q$-analogue of MZVs which plays the central role in our construction. Let $q$ be a real number and $k_1,\ldots,k_n$ are positive integers. The Schlesinger--Zudilin model is defined by
\begin{align*}
 \z_q^{SZ}(k_1,\ldots,k_n):=\sum_{m_1>\cdots >m_n>0}\frac{q^{k_1 m_1+\cdots +  k_n m_n}}
 										{[m_1]_q^{k_1}\cdots [m_n]_q^{k_n}} \in \bQ[\![q]\!]
\end{align*}
with the $q$-number $[m]_q:=\frac{1-q^m}{1-q}$. 
For $0<q<1$ this series of nested sums is convergent \cite{Schlesinger01,Zudilin03,Singer15}. And if $k_1\geq 2$ we obtain MZVs defined in \eqref{eq:MZV} in the limit  $q\nearrow 1$. 

Our approach, however, is based on the following modification of the above model:
\begin{align}\label{eq:defSZ}
 \z_q(k_1,\ldots,k_n):=\sum_{m_1>\cdots >m_n>0}\frac{q^{|k_1| m_1+\cdots +  |k_n| m_n}}
 										{[m_1]_q^{k_1}\cdots [m_n]_q^{k_n}} \in \bQ[\![q]\!].
\end{align}    
As a result, this permits to consider the $q$-parameter as a natural regularisation, such that \eqref{eq:defSZ} may be defined for integer arguments $k_1,\ldots,k_n \in \bZ$.  In the following we refer to this as the \emph{regularised Schlesinger--Zudilin} model.
 
Let $D:=\{s\in \bC\colon \RE(s)>0\}$. For $t\in D$ we define a one-parameter family of \emph{modified} $q$-MZVs by
\begin{align}\label{eq:defSZmod}
 \oz_q^{(t),\ast}(k_1,\ldots,k_n):=\sum_{m_1>\cdots>m_n>0}{\frac{q^{(|k_1|m_1+\cdots+|k_n|m_n)t}}{(1-q^{m_1})^{k_1}\cdots (1-q^{m_n})^{k_n}}}. 
\end{align}
Since $0<q<1$ and $t\in D$, convergence of the previous series is always ensured for any $k_1,\ldots,k_n\in \bZ$ with $k_1\neq 0$. Indeed, since $0<q<1$ we observe that $(1-q)^k\leq (1-q^m)^k\leq 1$ for any $m\in \bN$ and $k\in \bN_0$. Furthermore $\left| q^{(|k_1|m_1+\cdots+|k_n|m_n)t} \right|\leq q^{|k_1|m_1 \RE(t)}$ because $k_1\neq 0$ and $t\in D$. Therefore 
\begin{align*}
\left|{\frac{q^{(|k_1|m_1+\cdots+|k_n|m_n)t}}{(1-q^{m_1})^{k_1}\cdots (1-q^{m_n})^{k_n}}}\right| \leq C q^{|k_1|m_1\RE(t)}, 
\end{align*}
where $C$ is a constant depending on $k_1,\ldots,k_n$. All in all we obtain 
\begin{align*}
 \left|\oz_q^{(t),\ast}(k_1,\ldots,k_n)\right| & \leq C \sum_{m_1>\cdots>m_n>0} q^{|k_1|m_1\RE(t)}
  = C \sum_{m_1,\ldots,m_n>0} q^{|k_1|(m_1+\cdots+m_n)\RE(t)}\\
 &  = C\left(\frac{q^{|k_1|\RE(t)}}{1-q^{|k_1|\RE(t)}} \right)^n <\infty.  
\end{align*}
See also \cite[Prop. 2.2]{Zhao07}.
Note that $ (1-q)^{|\bk|}\oz_q^{(1),\ast}(k_1,\ldots,k_n) =  \z_q(k_1,\ldots,k_n)$.

%%%%%%%%%%%%%%%%%%%%%%%%%%%%%%%%

\section{The quasi-shuffle product}
\label{sec:quasi}

Recall that for integers $a,b>1$ multiplying the two sums 
\begin{equation}
\label{stuf}
	\sum_{m>0}\frac{1}{m^a}\sum_{n>0}\frac{1}{n^b}
	 = \sum_{m>n>0}\frac{1}{m^an^b} + \sum_{n>m>0}\frac{1}{n^bm^a} +  \sum_{m>0}\frac{1}{m^{a+b}},  
\end{equation}
which is known as Nielsen's reflexion formula $\zeta(a)\zeta(b)=\zeta(a,b)+\zeta(b,a)+\zeta(a+b)$. Defining the weight of products of MZVs as the sum of the weights of the factors, one notes the matching of weights on both sides. However, the length is not preserved. This generalises to arbitrary MZVs, and is commonly refereed to as quasi-shuffle product of MZVs. As an example we state the product 
$$
	\zeta(a,b)\zeta(c) = \zeta(c,a,b) + \zeta(a,b,c) + \zeta(a,c,b) + \zeta(a,b+c) + \zeta(a+c,b).
$$
for $a,c>1$ and $b>0$.

The key observation, which is not hard to verify, and which underlies our approach, is the fact that this simple product carries over to regularised Schlesinger--Zudilin $q$-MZVs, that is, the series of nested sums in \eqref{eq:defSZ} satisfy the quasi-shuffle product if all arguments are either strictly positive or strictly negative integers. For example the corresponding Nielsen reflexion formula for the regularised Schlesinger--Zudilin model defined in \eqref{eq:defSZ} for negative integers is
\begin{align*}
 \lefteqn{\z_q(-a)\z_q(-b) = \sum_{m>0}\left(q^{mt}[m]_q \right)^a\sum_{n>0}\left(q^{nt}[n]_q \right)^b}\\
 & = \sum_{m>n>0}\left(q^{mt}[m]_q \right)^a\left(q^{nt}[n]_q \right)^b+ \sum_{n>m>0}\left(q^{nt}[n]_q \right)^b\left(q^{mt}[m]_q \right)^a +\sum_{m>0}\left(q^{mt}[m]_q \right)^{a+b}\\
 & = \z_q(-a,-b)+\z_q(-b,-a)+\z_q(-a-b)
\end{align*}
for $a,b\in \bN$. One should note that the quasi-shuffle product is not preserved if we allow for integer arguments with mixed signs in \eqref{eq:defSZ}.

It is common to abstract the quasi-shuffle product using the words approach. Let $Y:=\{y_n\colon n\in \bZ\}$ be an alphabet. The set $Y^\ast$ of words defines a free monoid, with the empty word  denoted by $\be$. The linear span $\bQ\langle Y \rangle$ of words from $Y^*$ becomes a  commutative algebra when equipped with the \emph{quasi-shuffle product} $m_\ast\colon\bQ\langle Y \rangle \otimes \bQ\langle Y\rangle \to \bQ\langle Y \rangle$, $m_*(u \otimes v)=: u * v$ \cite{Hoffman12}. The latter is defined iteratively  for any words $u,v\in Y^\ast$ and letters $y_n,y_m \in Y$ by 
\begin{enumerate}[(i)] 
 \item $\be \ast u := u \ast \be := u$, 
 \item $y_n u \ast y_m v := y_n(u\ast y_m v) + y_m(y_n u\ast v) + y_{n+m}(u\ast v)$\hspace{0.5cm} ($n,m\in \bZ$),
\end{enumerate}
and extended to $\bQ\langle Y \rangle$ by distributivity. It is well-known that for $\fh^+:=\bQ\oplus \bigoplus_{n\geq 2}y_n\bQ\langle Y^+\rangle$  with $Y^+:=\{y_n\colon n\in \bN \}$ the map $\zeta^\ast\colon (\fh^+,m_\ast)\to (\bR,\cdot)$ defined by $\z^\ast(y_{k_1}\cdots y_{k_n}):=\z(k_1,\ldots,k_n)$ and $\z^\ast(\be):=1$ is an algebra morphism. 
For $\fh^-:=\bQ\langle Y^-\rangle$ with $Y^-:=\{y_n\colon n\in \bZ_{<0}\}$ we observe the following simple fact:

\begin{lemma}\label{lem:char}
The map $\oz_q^{(t)}\colon (\fh^-,m_\ast)\to (\bC[\![q]\!],\cdot)$ defined by $\oz_q^{(t)}(y_{k_1}\cdots y_{k_n}):=\oz_q^{(t),\ast}(k_1,\ldots,k_n)$ for any $k_1,\ldots,k_n\in \bZ_{<0}$ and $\oz_q^{(t)}(\be):=1$ is a morphism of algebras for any $t\in D$. 
\end{lemma}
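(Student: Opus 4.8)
We need to show that $\oz_q^{(t)}$ is an algebra morphism from $(\fh^-, m_\ast)$ to $(\bC[\![q]\!], \cdot)$. The multiplicativity on the empty word is clear since $\oz_q^{(t)}(\be) = 1$. The real content is that for words $u, v \in (Y^-)^\ast$ we have $\oz_q^{(t)}(u \ast v) = \oz_q^{(t)}(u) \cdot \oz_q^{(t)}(v)$.

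**The natural approach.** This should follow by essentially the same computation as the Nielsen reflexion example already worked out in the excerpt, namely the product rule for nested sums. The key idea is that the quasi-shuffle product is precisely designed to encode the rule for multiplying two nested sums indexed by strictly decreasing tuples. Let me set $f_k(m) := \frac{q^{|k|mt}}{(1-q^m)^k}$, so that $\oz_q^{(t),\ast}(k_1,\dots,k_n) = \sum_{m_1 > \cdots > m_n > 0} \prod_{i=1}^n f_{k_i}(m_i)$. The crucial multiplicative compatibility is that $f_a(m) f_b(m) = f_{a+b}(m)$, which holds because $q^{|a|mt} q^{|b|mt} = q^{(|a|+|b|)mt} = q^{|a+b|mt}$ — here it is essential that $a, b \in \bZ_{<0}$, so that $|a| + |b| = |a+b|$, and likewise the powers of $(1-q^m)$ add. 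This last point is exactly why the statement restricts to $\fh^-$ and fails for mixed signs, as the excerpt already warns.

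**Main line of proof.** I would proceed by induction on the total length $\ell(u) + \ell(v)$ of the two words, matching the recursive definition of $m_\ast$. For the inductive step, write $u = y_n u'$ and $v = y_m v'$ with $n, m \in \bZ_{<0}$. The product $\oz_q^{(t)}(u) \cdot \oz_q^{(t)}(v)$ is a product of two nested sums; splitting the outermost summation indices $m_1$ (for $u$) and $m_1'$ (for $v$) into the three cases $m_1 > m_1'$, $m_1' > m_1$, and $m_1 = m_1'$, and using $f_n(m_1) f_m(m_1) = f_{n+m}(m_1)$ in the diagonal case, yields exactly the three-term decomposition
\begin{align*}
 \oz_q^{(t)}(y_n u') \cdot \oz_q^{(t)}(y_m v')
 &= \oz_q^{(t)}\bigl(y_n(u' \ast y_m v')\bigr) + \oz_q^{(t)}\bigl(y_m(y_n u' \ast v')\bigr) \\
 &\quad + \oz_q^{(t)}\bigl(y_{n+m}(u' \ast v')\bigr),
\end{align*}
where the three shorter quasi-shuffles are handled by the induction hypothesis. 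This matches clause (ii) in the definition of $m_\ast$ precisely.

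**The main obstacle.** The genuine point requiring care is the rearrangement of the double sum into the three regimes, together with the justification that this rearrangement is legitimate. Because $0 < q < 1$ and $t \in D$, the series defining $\oz_q^{(t),\ast}$ converge absolutely (as shown in Section~\ref{sec:qMZVs}), so the interchange and regrouping of summation indices is unconditionally valid; I would invoke that absolute convergence explicitly to license the manipulation. The combinatorial bookkeeping of indices across the two decreasing tuples is the only technically fiddly part, but it is routine and is the abstract content of the quasi-shuffle recursion itself. I do not anticipate any deeper difficulty, since the statement is, as the authors say, a ``simple fact'' that transfers the classical partial-fraction/product-rule argument for MZVs verbatim to this $q$-setting.
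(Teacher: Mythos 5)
Your proposal is correct and follows exactly the route the paper has in mind: the paper itself offers no formal proof of Lemma~\ref{lem:char}, only the depth-one Nielsen reflexion computation together with the remark that the general case is ``not hard to verify,'' and your induction on $\ell(u)+\ell(v)$ with the three-way split of the outermost indices (using $|a|+|b|=|a+b|$ for $a,b\in\bZ_{<0}$ and absolute convergence to justify the regrouping) is precisely the standard way to fill that in. The only refinement you might add for full rigour is to carry out the induction on truncated sums $\sum_{M>m_1>\cdots>m_n>0}$ so that the inner sums in the three regimes are literally of the same form as the quantities covered by the induction hypothesis, then let $M\to\infty$; but as you say, this is routine bookkeeping.
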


Following \cite{Hoffman12} we introduce the deconcatenation coproduct $\Delta \colon \bQ\langle Y \rangle \to \bQ\langle Y \rangle \otimes \bQ\langle Y \rangle$ 
\begin{align}\label{eq:coproduct}
	\Delta(w) = \sum_{uv=w}u\otimes v
\end{align}
for $w \in Y^\ast$. Together with the quasi-shuffle product, $(\bQ\langle Y \rangle,m_\ast, \Delta)$ becomes a Hopf algebra. This construction directly applies to $Y^-$ and $\fh^-$ and therefore we observe: 

\begin{corollary}\label{cor:Hopf}
 The triple $(\fh^{-}, m_\ast,\Delta)$ is a graded, connected Hopf algebra, in which the grading is given by the weight $\wt(y_{k_1}\cdots y_{k_n}):= |k_1|+\cdots + |k_n|$. 
\end{corollary}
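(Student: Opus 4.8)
The plan is to exploit that the full quasi-shuffle bialgebra $(\bQ\langle Y\rangle, m_\ast, \Delta)$ is already available from \cite{Hoffman12}, so that everything reduces to three verifications for the sub-object $\fh^-$: that it is closed under both structure maps, that it is graded by $\wt$, and that it is connected. Once these are in place the antipode comes for free, since a graded connected bialgebra is automatically a Hopf algebra (see e.g. \cite{Manchon08}); the antipode may then be defined by the usual recursion along the grading, which is why no separate construction is needed.

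First I would check that $\fh^-$ is a sub-bialgebra of $\bQ\langle Y\rangle$. For the product this is a short induction on recursion~(ii): if $n,m\in\bZ_{<0}$ then $n+m\in\bZ_{<0}$, so each of the three letters $y_n,y_m,y_{n+m}$ occurring in $y_nu\ast y_mv$ again lies in $Y^-$, and by induction $u\ast y_mv$, $y_nu\ast v$ and $u\ast v$ remain in $\fh^-$. For the coproduct it is immediate, since deconcatenating a word in $(Y^-)^\ast$ produces only pairs of words in $(Y^-)^\ast$. Thus $\fh^-$ is stable under $m_\ast$ and $\Delta$ and inherits the bialgebra axioms from the ambient structure.

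Next I would establish the grading. Additivity of $\wt$ under concatenation is clear, which simultaneously shows that $\Delta$ is a graded map. For the product I would prove $\wt(u\ast v)=\wt(u)+\wt(v)$ on homogeneous elements by induction on $\wt(u)+\wt(v)$ using recursion~(ii): the three summands contribute weights $|n|+\wt(u\ast y_mv)$, $|m|+\wt(y_nu\ast v)$ and $|n+m|+\wt(u\ast v)$, and by the induction hypothesis the first two equal $|n|+|m|+\wt(u)+\wt(v)$, while the third does as well precisely because $|n+m|=|n|+|m|$ for $n,m<0$. Connectedness then follows because every letter $y_k$ with $k<0$ has weight $|k|\ge 1$, so any nonempty word has weight at least one and hence $\fh^-_0=\bQ\be\cong\bQ$; each homogeneous component $\fh^-_N$ is finite-dimensional, spanned by the $2^{N-1}$ words encoding the compositions of $N$.

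The only genuinely delicate point is the weight identity $|n+m|=|n|+|m|$, and it is delicate conceptually rather than technically: it pins down why the construction must be confined to the all-negative alphabet $Y^-$ rather than performed on $\fh$ with arbitrary integer indices, in accordance with the earlier remark that neither the product nor its homogeneity survives for arguments of mixed sign. With that observation in hand, all remaining steps are the standard verifications recalled above, and the assertion of Corollary~\ref{cor:Hopf} follows.
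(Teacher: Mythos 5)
Your proposal is correct and follows the same route as the paper, which simply invokes Hoffman's quasi-shuffle Hopf algebra construction and observes that it restricts to the alphabet $Y^-$; you merely spell out the routine verifications (stability under $m_\ast$ and $\Delta$, homogeneity of the product via $|n+m|=|n|+|m|$ for $n,m<0$, connectedness, and the standard fact that a connected graded bialgebra is Hopf) that the paper leaves implicit. Your identification of $|n+m|=|n|+|m|$ as the point that forces the restriction to all-negative indices is exactly the right observation and is consistent with the paper's remark about mixed signs.
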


The reader is referred to \cite{Manchon08,Manchon10b} for details on connected graded Hopf algebras and related topics relevant to the forthcoming presentation.

%%%%%%%%%%%%%%%%%%%%%%%%%%%%%%%%

\section{Renormalisation of MZVs}
\label{sec:main}

In order to extract renormalised MZVs from regularised expressions we use a theorem of Connes and Kreimer introduced in perturbative quantum field theory. See also \cite{Manchon08, Ebrahimi07a} for related results. Regularisation refers to a procedure of introducing a (family of) formal parameter(s) that renders an otherwise divergent expression formally finite. This step displays some freedom as how to regularise MZVs at negative arguments, and it may alter algebraic properties, which makes it therefore a nontrivial part of the renormalisation program.

\begin{theorem}[\cite{Connes00},\cite{Connes01}]
\label{theo:ConKre}
Let $(\cH,m_{\cH},\Delta)$ be a graded, connected Hopf algebra and  $\cA$ a commutative unital algebra equipped with a renormalisation scheme $\cA=\cA_{-}\oplus \cA_{+}$ and the corresponding idempotent Rota--Baxter operator $\pi$, where $\cA_{-}=\pi(\cA)$ and $\cA_{+}=(\Id-\pi)(\cA)$. Further let $\psi\colon \cH \to \cA$ be a Hopf algebra character, i.e., a multiplicative linear map from $\cH$ to $\cA$. Then the character $\psi$ admits a unique decomposition 
  \begin{align}\label{eq:birk}
   \psi=\psi_{-}^{\star{(-1)}}\star \psi_{+}
  \end{align}
called \emph{algebraic Birkhoff decomposition}, in which $\psi_{-}\colon \cH \to \bQ \oplus \cA_{-}$ and $\psi_{+}\colon \cH \to \cA_{+}$ are characters. The product on the right hand side of \eqref{eq:birk} is the convolution product defined on the vector space  $L(\cH, \cA)$ of linear maps from the $\cH$ to $\cA$.   
\end{theorem}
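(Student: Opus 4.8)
The plan is to build $\psi_-$ and $\psi_+$ explicitly through the recursive Bogoliubov formula and then to verify, by induction on the grading, that they are characters. Write $e := u_\cA \circ \varepsilon$ for the unit of the convolution algebra $(L(\cH,\cA),\star)$, where $\varepsilon$ is the counit of $\cH$ and $u_\cA$ the unit of $\cA$. Connectedness of $\cH$ means that for homogeneous $x$ with $\wt(x)\geq 1$ the coproduct splits as $\Delta(x) = x\otimes \be + \be\otimes x + \Delta'(x)$, with the reduced coproduct $\Delta'(x) = \sum_{(x)} x'\otimes x''$ supported on tensors whose components have strictly smaller (but still positive) weight. This filtration is exactly what makes every recursion below terminate.

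First I would set $\psi_-(\be) := 1_\cA$ and define $\psi_-$ recursively on weight by
$$\psi_-(x) := -\pi\Big(\psi(x) + \sum_{(x)} \psi_-(x')\,\psi(x'')\Big),$$
the sum ranging over $\Delta'(x)$; since the inner terms involve only values of $\psi_-$ on elements of weight strictly below $\wt(x)$, the definition is unambiguous. Putting $\psi_+ := \psi_- \star \psi$ gives $\psi_+(\be)=1_\cA$ and, for $\wt(x)\geq 1$,
$$\psi_+(x) = (\Id - \pi)\Big(\psi(x) + \sum_{(x)} \psi_-(x')\,\psi(x'')\Big).$$
Because $\pi$ is the idempotent with image $\cA_-$ and $\Id-\pi$ the complementary idempotent with image $\cA_+$, one reads off immediately that $\psi_-$ is valued in $\bQ\oplus\cA_-$ and $\psi_+$ in $\cA_+$, as required. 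The factorisation identity is then automatic: as $\psi_-(\be)$ is invertible, $\psi_-$ is invertible for $\star$, and $\psi_-^{\star(-1)} \star \psi_+ = \psi_-^{\star(-1)} \star (\psi_- \star \psi) = \psi$.

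The hard part, and the only step that genuinely uses the hypothesis on $\cA$, is showing that $\psi_-$ (equivalently $\psi_+$) is multiplicative. I would prove $\psi_-(xy)=\psi_-(x)\,\psi_-(y)$ by induction on $\wt(xy)$, expanding $\Delta'(xy)$ via the facts that $\Delta$ is an algebra morphism and that $\psi$ is already a character. The decisive input is that splitting $\cA=\cA_-\oplus\cA_+$ into two \emph{subalgebras} forces $\pi$ to obey the weight $-1$ Rota--Baxter relation
$$\pi(a)\,\pi(b) = \pi\big(\pi(a)\,b + a\,\pi(b) - ab\big) \qquad (a,b\in\cA);$$
substituting this identity into the expanded recursion is what makes the cross terms collapse precisely into $\psi_-(x)\,\psi_-(y)$. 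This Atkinson-type bookkeeping is where all the combinatorial care is needed, and I expect it to be the main obstacle.

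Finally, for uniqueness I would argue that any decomposition $\psi = \phi_-^{\star(-1)} \star \phi_+$ with $\phi_\pm$ characters valued in $\bQ\oplus\cA_-$ and in $\cA_+$ respectively must satisfy $\phi_- \star \psi = \phi_+$; applying $\pi$ and $\Id-\pi$ to this relation weight by weight recovers exactly the Bogoliubov recursion for $\phi_-$, and since that recursion has a unique solution one concludes $\phi_\pm = \psi_\pm$.
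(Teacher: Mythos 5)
This theorem is imported: the paper cites it from Connes--Kreimer and offers no proof of its own, recording only the recursions \eqref{eq:psim}--\eqref{eq:psip}, which are exactly the formulas you write down. So there is no in-paper argument to compare against; your route is the standard one (Bogoliubov recursion for $\psi_-$, then $\psi_+:=\psi_-\star\psi$, multiplicativity via the Rota--Baxter identity, uniqueness by re-deriving the recursion), and the parts you actually carry out are correct: well-definedness of the recursion by induction on the weight using connectedness, the containments $\psi_-(\ker\varepsilon)\subseteq\cA_-$ and $\psi_+(\cH)\subseteq\cA_+$, the factorisation identity, the observation that a direct sum decomposition into two subalgebras makes the projector $\pi$ a weight $-1$ Rota--Baxter operator, and the uniqueness argument (which correctly reduces any competing pair $\phi_\pm$ to the same recursion by applying $\pi$ to $\phi_-\star\psi=\phi_+$).

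The one genuine gap is that the multiplicativity of $\psi_-$ --- which you yourself flag as ``the hard part'' and ``the main obstacle'' --- is asserted rather than proved. Saying that substituting the Rota--Baxter relation into the expanded recursion ``makes the cross terms collapse'' is the conclusion of the argument, not the argument: one still has to expand $\Delta'(xy)$ using $\Delta(xy)=\Delta(x)\Delta(y)$ and the fact that $\psi$ is a character, group the resulting terms so that the induction hypothesis applies to all factors of strictly smaller weight, and verify that what survives under $\pi$ is precisely of the form $\pi\big(\pi(a)b+a\pi(b)-ab\big)$ for the correct $a,b$, so that the Rota--Baxter identity closes the induction. That bookkeeping is the entire nontrivial content of the theorem --- everything else in your write-up is formal --- so the proof is not complete without it. If you want to avoid the term-by-term expansion, a cleaner organisation is to lift $\pi$ to the Rota--Baxter operator $\phi\mapsto\pi\circ\phi$ on the convolution algebra $L(\cH,\cA)$ and invoke Atkinson's factorisation there, but one way or another this computation must actually be carried out.
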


Recall that the vector space  $L(\cH,\cA)$ together with the convolution product $\phi \star \psi := m_{\cA} \circ (\phi \otimes \psi) \circ \Delta : \cH \to \cA$, where $\phi,\psi \in L(\cH,\cA)$, is an unital associative algebra. The set of characters is denoted by $G_\cA$ and forms a (pro-unipotent) group for the convolution product with (pro-nilpotent) Lie algebra $g_{\cA}$ of infinitesimal characters. The latter are linear maps $\xi \in L(\cH, \cA)$ such that for elements $x, y \in \cH$, both different from $\mathbf{1}$, $\xi(xy) = 0$. The exponential map $\exp^{\star}$ restricts to a bijection between $ g_{\cA}$ and $G_{\cA}$. The inverse of a character $\psi \in G_{\cA}$ is given by composition with the Hopf algebra antipode $S: \cH \to \cH$, e.g., $\psi_{-}^{\star{(-1)}} = \psi_- \circ S$. As a regularisation scheme we choose the commutative algebra $\cA:=\bQ[z^{-1},z]\!]$ with $\cA = \cA_{-}\oplus \cA_{+},$ where $\cA_{-}:=z^{-1}\bQ[z^{-1}]$ and $\cA_{+}:=\bQ[\![z]\!]$. On $\cA$ we define the corresponding projector $\pi: \cA \to \cA_{-}$ by 
\begin{align*}
 \pi\left(\sum_{n=-k}^\infty a_n z^n \right):= \sum_{n=-k}^{-1}a_n z^n
\end{align*}
with the common convention that the sum over the empty set is zero. Then $\pi$ and $\Id-\pi: \cA \to \cA_{+}$ are Rota--Baxter operators of weight $-1$ (see \cite{Connes00,Ebrahimi02,Ebrahimi07}). 

\smallskip

For the renormalisation of MZVs we perform two main steps. Firstly, we construct a regularised character $\psi\colon \fh^- \to \cA$ which means that we have to deform the divergent MZVs to a meromorphic function in the regularisation parameter $z$. Secondly, we apply an appropriate subtraction scheme. The latter is naturally given by Equation \eqref{eq:birk}, which essentially relies on the convolution product induced by the Hopf algebra $(\fh^{-},m_\ast,\Delta)$. The maps $\psi_+$ and $\psi_-$ of Equation \eqref{eq:birk} are recursively given by 
\begin{align}
 \psi_-(x) & =-\pi \left(\psi(x)+\sum_{(x)}\psi_{-}(x')\psi(x'') \right), \label{eq:psim} \\
 \psi_+(x)  & =(\Id-\pi) \left(\psi(x)+\sum_{(x)}\psi_{-}(x')\psi(x'') \right),\label{eq:psip}
\end{align}
for $x \in \cH$, $\wt(x) >0$, and $\psi_\pm \in G_\cA$. Note that we used Sweedler's notation for the reduced coproduct $\Delta'(x):=\sum_{(x)}x'\otimes x'':=\Delta(x)-\be \otimes x - x\otimes \be$. 
  
The map $\psi^{(t)}\colon (\fh^-,m_\ast) \to \bC[z^{-1},z]\!]$ defined by 
\begin{align}\label{eq:char}
 \psi^{(t)}(y_{-k_1}\cdots y_{-k_n})(z):= \frac{(-1)^{k_1+\cdots+k_n}}{z^{k_1+\cdots+ k_n}} \oz_{e^z}^{(t)}(y_{-k_1}\cdots y_{-k_n})
\end{align}
for $k_1,\ldots,k_n\in \bN$ is an algebra morphism (see Lemma \ref{lem:defexp} below). Applying Theorem \ref{theo:ConKre} we define \emph{renormalised MZVs} $\z_+^{(t)}$ by
\begin{align*}
 \z_+^{(t)}(-k_1,\ldots,-k_n):=\lim_{z\to 0} \psi_+^{(t)}(y_{-k_1}\cdots y_{-k_n})
\end{align*}
for any $k_1,\ldots,k_n\in \bN$. 

Now we present the main theorem of the paper which will be proven in the next section.
\begin{theorem}\label{theo:main}
Let $t\in D$. 
\begin{enumerate}[a)]
 \item The renormalised MZVs $\z_+^{(t)}$ verify the meromorphic continuation of MZVs , i.e., for any $\bk\in (\bZ_{<0})^n\setminus \cS_n$ we have $\z_+^{(t)}(\bk)=\z(\bk)\in \bQ$. 
 \item The renormalised MZVs $\z_+^{(t)}$ satisfy the quasi-shuffle product. 
 \item The renormalised MZVs $\z_+^{(t)}$ are rational functions in $t$ over $\bQ$ without singularities in $D$.
\end{enumerate}
\end{theorem}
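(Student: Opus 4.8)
The plan is to establish the three parts in an order that exploits their logical dependencies, starting from the concrete generating-function structure of the regularised $q$-MZVs. First I would make explicit the character $\psi^{(t)}$ of \eqref{eq:char}. The key observation is that $\oz_q^{(t),\ast}(-k_1,\ldots,-k_n)$ is a finite $\bQ$-linear combination of products of geometric-type sums $\sum_{m>0} q^{(|k|m)t}(1-q^m)^{k}$, which upon the substitution $q = e^z$ become explicit meromorphic functions of $z$ whose Laurent expansion at $z=0$ I can read off term by term. Concretely, expanding $(1-e^{zm})^{k}$ in powers of $z$ and summing the resulting geometric series in $m$ produces a Laurent series in $z$ whose coefficients are polynomials in $t$ with rational coefficients (after dividing by $z^{k_1+\cdots+k_n}$ as prescribed). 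The main technical lemma here is the promised Lemma~\ref{lem:defexp}, which asserts $\psi^{(t)}$ is an algebra morphism into $\bC[z^{-1},z]\!]$; I would prove it by combining Lemma~\ref{lem:char} (multiplicativity of $\oz_q^{(t)}$) with the fact that the prefactor $(-1)^{|\bk|}z^{-|\bk|}$ is itself multiplicative with respect to the weight grading, so that the quasi-shuffle relation is preserved after rescaling.

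For part b) I would argue purely Hopf-algebraically, invoking the standard fact underlying Theorem~\ref{theo:ConKre}: since $\cA_+=\bQ[\![z]\!]$ is a subalgebra and $\pi$ is a Rota--Baxter operator of weight $-1$ on the commutative algebra $\cA$, the component $\psi_+^{(t)}$ in the Birkhoff decomposition \eqref{eq:birk} is again an algebra character $\fh^-\to\cA_+$. Because $\psi^{(t)}$ is a character (established above) and the target algebra is commutative, multiplicativity of $\psi_+^{(t)}$ follows from the group structure on $G_\cA$. Then $\z_+^{(t)}(\bk)=\lim_{z\to 0}\psi_+^{(t)}(y_{-k_1}\cdots y_{-k_n})$ is the evaluation at $z=0$ of a character into $\bQ[\![z]\!]$, and evaluation at $z=0$ is itself an algebra morphism $\bQ[\![z]\!]\to\bC$, so the composite respects the quasi-shuffle product. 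The substance here is that $\psi_+^{(t)}(y_{-k_1}\cdots y_{-k_n})$ genuinely lies in $\cA_+$ and hence the limit exists finitely; this is guaranteed by the Connes--Kreimer theorem itself.

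For part a) I would compare $\z_+^{(t)}$ with the meromorphic continuation on the non-singular locus $(\bZ_{<0})^n\setminus\cS_n$. The strategy is to show that the counterterm $\psi_-^{(t)}$ contributes trivially precisely when $\bk\notin\cS_n$: away from the singular subvariety, the leading Laurent expansion of $\psi^{(t)}(y_{-k_1}\cdots y_{-k_n})$ has no pole, so minimal subtraction $(\Id-\pi)$ leaves it unchanged and $\z_+^{(t)}(\bk)=\lim_{z\to 0}\psi^{(t)}(\bk)(z)$ equals the value obtained by direct $q\nearrow 1$ limiting, which reproduces the meromorphic continuation and lands in $\bQ$. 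I expect this matching to require a careful bookkeeping argument connecting the pole structure of the explicit Laurent series computed in the first step with the description of $\cS_n$ in \eqref{eq:meroMZV}; this is the main obstacle of the whole proof, since one must verify that the recursive subtraction in \eqref{eq:psim}--\eqref{eq:psip} does not spuriously modify values on the non-singular locus and that the resulting rational numbers agree with the known special values such as \eqref{eq:mero1} and \eqref{eq:mero2}.

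Finally, part c) follows by tracking the dependence on $t$ through the entire construction. Since each $\oz_{e^z}^{(t)}(y_{-k_1}\cdots y_{-k_n})$ is, by the first step, a Laurent series in $z$ whose coefficients are polynomials in $t$ over $\bQ$, and since the Birkhoff recursion \eqref{eq:psim}--\eqref{eq:psip} involves only the Rota--Baxter projection $\pi$, multiplication in $\cA$, and the coproduct $\Delta$ (all of which are $\bQ[t]$-linear operations preserving this structure), the component $\psi_+^{(t)}$ has coefficients that are rational functions in $t$ over $\bQ$. Taking $\lim_{z\to 0}$ then yields $\z_+^{(t)}(\bk)$ as a rational function in $t$. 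The assertion that it has no singularities in $D$ reduces to showing the relevant geometric sums converge for $\RE(t)>0$ and that no denominator in $t$ vanishes there; this I would derive from the convergence estimate already given for $\oz_q^{(t),\ast}$ in Section~\ref{sec:qMZVs}, which holds uniformly for $t\in D$.
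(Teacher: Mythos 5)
Your parts b) and c) follow the paper's argument essentially verbatim and are fine: b) is pure Connes--Kreimer formalism, and c) is read off from the explicit coefficients $C^{k_1,\ldots,k_n}_{m_1,\ldots,m_n}(t)$ of Lemma \ref{lem:defexp} (for the ``no singularities in $D$'' clause the relevant point is that the only $t$-denominators arising are of the form $l_1+k_1t+\cdots+l_j+k_jt$, whose zeros lie in $\RE(t)\le 0$; the convergence estimate of Section \ref{sec:qMZVs} is not really what is needed there).

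The genuine gap is in part a). Your proposed mechanism --- that away from $\cS_n$ the Laurent expansion of $\psi^{(t)}(y_{-k_1}\cdots y_{-k_n})$ has no pole, so that minimal subtraction acts trivially and $\z_+^{(t)}(\bk)$ equals a naive limit of the unsubtracted expression --- is false already in depth one. By Lemmas \ref{lem:defexp} and \ref{lem:meroconst} one has $\psi^{(t)}(y_{-k})(z)=C_0^k(t)z^{-k-1}-\frac{B_{k+1}}{k+1}+O_t(z)$ with $C_0^k(t)=\sum_{l=0}^k\binom{k}{l}\frac{(-1)^{l+k+1}}{l+kt}\neq 0$, even though $-k\notin\cS_1$: the regularised character always carries a pole of order $k+1$, and the renormalised value is the constant Laurent coefficient after subtraction, not a direct limit. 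What actually has to be proved is that this constant coefficient reproduces \eqref{eq:mero1}, and in depth two that the constant coefficient of $(\Id-\pi)\bigl(\psi^{(t)}(y_{-k_1}y_{-k_2})+\psi_-^{(t)}(y_{-k_1})\psi^{(t)}(y_{-k_2})\bigr)$ reproduces \eqref{eq:mero2}; there the counterterm's contribution to the finite part vanishes only because $B_{k_1+k_2+2}=0$ when $k_1+k_2$ is odd, not for structural reasons. These computations rest on the evaluation $C_m^k(t)=(-1)^{k+kt+1}\left.(x\partial_x)^{m-1}\bigl((1+x)^kx^{kt}\bigr)\right|_{x=-1}$, which your outline does not supply. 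You also miss the decisive simplification that $(\bZ_{<0})^n\subseteq\cS_n$ for $n\ge 3$, so that part a) reduces to exactly these two finite computations rather than the general-depth ``bookkeeping argument'' you defer as the main obstacle.
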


As a consequence we obtain: 

\begin{corollary}\label{coro:main}
 \begin{enumerate}[a)]
  \item Let $t\in D\cap\bQ$ then $\z_+^{(t)}(\bk)\in \bQ$ for any $\bk\in (\bZ_{<0})^n$, $n\in \bN$. 
  \item Let $t\in D\cap \bR$ be transcendental over $\bQ$. Then there exists a $\bk\in (\bZ_{<0})^n$ such that $\z_+^{(t)}(\bk)\in \bR\setminus \bQ$.
 \end{enumerate}
\end{corollary}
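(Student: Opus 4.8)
The plan is to deduce both statements from Theorem \ref{theo:main}~c), which tells us that for each fixed argument $\bk\in(\bZ_{<0})^n$ the renormalised value is $\z_+^{(t)}(\bk)=R_{\bk}(t)$ for a rational function $R_{\bk}\in\bQ(X)$ having no pole in $D$. Since $t\in D\cap\bR$ is real and is not a pole, $R_{\bk}(t)$ is a well-defined real number. The only genuinely arithmetic ingredient is the elementary observation that \emph{if $R\in\bQ(X)$ is defined at a point $t$ transcendental over $\bQ$, then $R(t)\in\bQ$ if and only if $R$ is constant}: writing $R=P/Q$ with $P,Q\in\bQ[X]$ and assuming $R(t)=c\in\bQ$ gives $P(t)-cQ(t)=0$, so the polynomial $P-cQ\in\bQ[X]$ vanishes at a transcendental number and must therefore vanish identically, whence $R\equiv c$.

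For part a), a point $t\in D\cap\bQ$ is a positive rational which, by Theorem \ref{theo:main}~c), is not a pole of any $R_{\bk}$. Evaluating a rational function with rational coefficients at a rational point of its domain returns a rational number, so $\z_+^{(t)}(\bk)=R_{\bk}(t)\in\bQ$ for every $\bk\in(\bZ_{<0})^n$ and every $n\in\bN$; no further work is needed.

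For part b), by the lemma above it suffices to exhibit a single argument $\bk$ for which $R_{\bk}$ is a non-constant rational function: for then $R_{\bk}(t)\in\bR\setminus\bQ$ at every transcendental $t\in D$. By Theorem \ref{theo:main}~a) such a $\bk$ must lie in the singular locus $\cS_n$, since off $\cS_n$ one has $R_{\bk}(t)=\z(\bk)$ identically, a constant. I would therefore pass to depth two at the simplest point of $\cS_2$, namely $\bk=(-1,-1)$ (here $s_1+s_2=-2\in\{0,-2,-4,\dots\}$), and run the Birkhoff recursion \eqref{eq:psim}--\eqref{eq:psip} explicitly. Expanding $\oz_{e^z}^{(t),\ast}$ into geometric sums and their Laurent series in $z$, one first obtains the depth-one counterterm $\psi_-^{(t)}(y_{-1})(z)=-\tfrac{1}{t(t+1)}z^{-2}$, which is manifestly non-constant in $t$. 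Feeding this counterterm into the depth-two recursion yields $\z_+^{(t)}(-1,-1)=\lim_{z\to0}\psi_+^{(t)}(y_{-1}y_{-1})$, and the resulting function $R_{(-1,-1)}\in\bQ(X)$ is non-constant, supplying the desired argument.

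The lemma and part a) are immediate; essentially all the weight of the corollary sits in part b). There the main obstacle is the explicit verification that the renormalised value at the chosen singular point really varies with $t$, i.e.\ that the $t$-dependent counterterms surviving the minimal subtraction do not cancel after applying $(\Id-\pi)$ and taking $z\to0$. This is exactly the type of computation recorded in Section \ref{sec:num}, so I would either invoke the value tabulated there or carry out the depth-two Laurent expansion above to produce one concrete non-constant $R_{\bk}$.
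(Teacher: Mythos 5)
Your reduction of both parts to Theorem \ref{theo:main}~c) is exactly the paper's strategy, and your arithmetic lemma (a rational function over $\bQ$ that takes a rational value at a transcendental point must be constant) is correct; part a) is fine as stated. The gap is in your choice of witness for part b). The argument $\bk=(-1,-1)$ does \emph{not} work: as the paper points out in Section \ref{sec:num}, the quasi-shuffle relation $\z_+^{(t)}(-k)\,\z_+^{(t)}(-k)=2\z_+^{(t)}(-k,-k)+\z_+^{(t)}(-2k)$, combined with Theorem \ref{theo:main}~a) (which gives the constants $\z_+^{(t)}(-1)=-\tfrac{1}{12}$ and $\z_+^{(t)}(-2)=0$), forces $\z_+^{(t)}(-1,-1)=\tfrac{1}{288}$ for every $t\in D$. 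So $R_{(-1,-1)}$ is constant even though $(-1,-1)\in\cS_2$; lying in the singular locus is necessary but not sufficient for $t$-dependence. The same obstruction kills all diagonal entries $(-k,-k)$, and Lemma \ref{lem:mero2} kills all depth-two entries with $k_1+k_2$ odd, since those equal $\tfrac12 B_{k_1+k_2+1}/(k_1+k_2+1)$ independently of $t$. Your depth-one counterterm $\psi_-^{(t)}(y_{-1})(z)=-\tfrac{1}{t(t+1)}z^{-2}$ is indeed non-constant in $t$, but that alone does not show the $t$-dependence survives the subtraction $(\Id-\pi)$ and the limit $z\to 0$ --- and at $(-1,-1)$ it demonstrably does not.

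The paper's proof of b) instead defers to the explicit computation of the first genuinely non-constant case, $\bk=(-1,-3)$ (weight even, $k_1\neq k_2$), for which the Birkhoff recursion yields
\begin{align*}
 \z_+^{(t)}(-1,-3)=\frac{1}{8064}\,\frac{166t^2+166t+31}{(4t+3)(4t+1)},
\end{align*}
visibly non-constant, after which your transcendence lemma finishes the argument. To repair your proof, replace the witness $(-1,-1)$ by $(-1,-3)$ and carry out (or cite) that depth-two Laurent expansion; everything else in your write-up then goes through.
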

\begin{proof}
 The first claim follows from Theorem \ref{theo:main} c). For the proof of b) see Section \ref{sec:num}. 
\end{proof}

%%%%%%%%%%%%%%%%%%%%%%%%%%%%%%%%

\section{Proof of Theorem \ref{theo:main}}
\label{sec:proofmain}

In this section we implicitly use the following corollary of the fundamental theorem of algebra.
Let $p(t)\in \bC[t]$ be a polynomial over $\bC$ with $p(n)=0$ for countably infinitely many $n\in \bN$. Then $p=0$. 
Therefore some proofs in the subsequent paragraph -- concerning polynomial identities in $t$ -- are provided for $t\in \bN$ which then can be extended to $t\in D$ using this argument.    

\begin{lemma}\label{lem:defexp}
  Let $k_1,\ldots,k_n\in \bN$. Explicitly we have  
 \begin{align*}
  \psi^{(t)}(y_{-k_1}\cdots y_{-k_n})(z) = \sum_{m_1,\ldots,m_n\geq 0}\frac{B_{m_1}}{m_1!} \cdots \frac{B_{m_n}}{m_n!} C^{k_1,\ldots,k_n}_{m_1,\ldots,m_n}(t) z^{m_1+\cdots+m_n-(k_1+\cdots+k_n)-n}
 \end{align*}
 with 
 \begin{align}\label{eq:contC}
  C_{m_1,\ldots,m_n}^{k_1,\ldots,k_n}(t):=\sum_{l_1=0}^{k_1} \cdots \sum_{l_n=0}^{k_n}
  \prod_{j=1}^n\binom{k_j}{l_j} (-1)^{l_j+k_j+1}(l_1+k_1t+\cdots + l_j+k_jt)^{m_j-1}.
 \end{align}
 Furthermore $\psi^{(t)}\colon (\fh^-,m_\ast) \to \bC[z^{-1},z]\!]$ is an algebra morphism.
\end{lemma}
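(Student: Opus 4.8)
The plan is to compute the Laurent expansion of $\psi^{(t)}(y_{-k_1}\cdots y_{-k_n})(z)$ directly from the definitions and then deduce the morphism property from the already-established multiplicativity of $\oz_q^{(t)}$ (Lemma \ref{lem:char}). First I would insert $q=e^z$ into the defining series \eqref{eq:defSZmod} for negative arguments, giving
\[
\oz_{e^z}^{(t),\ast}(-k_1,\ldots,-k_n)=\sum_{m_1>\cdots>m_n>0}e^{(k_1m_1+\cdots+k_nm_n)tz}\prod_{j=1}^n(1-e^{zm_j})^{k_j},
\]
and then binomially expand each factor $(1-e^{zm_j})^{k_j}=\sum_{l_j=0}^{k_j}\binom{k_j}{l_j}(-1)^{l_j}e^{zm_jl_j}$. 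Interchanging the finite $l$-summations with the nested $m$-sum, the summand becomes $\prod_j\binom{k_j}{l_j}(-1)^{l_j}\,e^{z\sum_j m_j(l_j+k_jt)}$, working in the convergent regime $0<q<1$ and expanding as a Laurent series at $z=0$.

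The key simplification is to decouple the constraint $m_1>\cdots>m_n>0$. I would substitute $m_j=a_j+a_{j+1}+\cdots+a_n$ with independent $a_i\geq 1$, under which $\sum_j m_j(l_j+k_jt)=\sum_{r=1}^n a_r S_r$, where $S_r:=\sum_{i=1}^r(l_i+k_it)$. The exponential then factorises as $\prod_r e^{za_rS_r}$, and summing each $a_r\geq1$ independently yields a product of geometric series $\prod_r\frac{1}{e^{-zS_r}-1}$. Expanding each factor via the Bernoulli generating series \eqref{eq:Bernoulli} gives $\frac{1}{e^{-zS_r}-1}=-\sum_{m_r\geq0}\frac{B_{m_r}}{m_r!}(zS_r)^{m_r-1}$.

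It then remains to collect everything. Multiplying by the prefactor $(-1)^{k_1+\cdots+k_n}z^{-(k_1+\cdots+k_n)}$ from \eqref{eq:char} and tracking the exponent of $z$ produces exactly $z^{m_1+\cdots+m_n-(k_1+\cdots+k_n)-n}$, while the accumulated sign $(-1)^{k_1+\cdots+k_n}(-1)^n=\prod_j(-1)^{k_j+1}$ recombines with the internal $\prod_j(-1)^{l_j}$ into $\prod_j\binom{k_j}{l_j}(-1)^{l_j+k_j+1}S_j^{m_j-1}$, i.e. the stated coefficient $C^{k_1,\ldots,k_n}_{m_1,\ldots,m_n}(t)$ once one notes $S_j=l_1+k_1t+\cdots+l_j+k_jt$. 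I expect the main obstacle to be precisely this sign bookkeeping: the factor $\frac{1}{e^{-zS_r}-1}$ naturally yields the ``minus'' Bernoulli numbers of the standard convention $t/(e^t-1)$, and it is only after passing to the convention \eqref{eq:Bernoulli} used here (equivalently, using $B_m=(-1)^mB_m^{-}$) that the spurious factor $(-1)^{m_1+\cdots+m_n}$ cancels and the signs align. Since every step is a formal Laurent identity it holds for all $t\in D$; alternatively one verifies the polynomial-in-$t$ identity for $t\in\bN$ and invokes the fundamental-theorem-of-algebra argument recalled at the start of this section.

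Finally, for the morphism property I would write $\psi^{(t)}=\chi\cdot\oz_{e^z}^{(t)}$ with $\chi(w):=(-1)^{\wt(w)}z^{-\wt(w)}$. Because the weight is a grading for the quasi-shuffle product (Corollary \ref{cor:Hopf}), every word appearing in $u\ast v$ has weight $\wt(u)+\wt(v)$, so $\chi$ is constant on $u\ast v$ and equals $\chi(u)\chi(v)$. Pulling this scalar out of the sum and applying the multiplicativity of $\oz_{e^z}^{(t)}$ from Lemma \ref{lem:char} then gives $\psi^{(t)}(u\ast v)=\psi^{(t)}(u)\psi^{(t)}(v)$, so that $\psi^{(t)}$ is an algebra morphism.
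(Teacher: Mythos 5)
Your proposal is correct and follows essentially the same route as the paper: binomial expansion of $(1-q^{m_j})^{k_j}$, decoupling of the nested sum into independent geometric series, expansion via the Bernoulli generating series \eqref{eq:Bernoulli}, and the morphism property deduced from Lemma \ref{lem:char} together with the weight-homogeneity of the quasi-shuffle product. The only cosmetic difference is that you substitute $q=e^z$ before summing the geometric series while the paper sums first in $q$ and substitutes afterwards; the sign bookkeeping you flag works out exactly as you predict.
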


\begin{proof}
In Equation \eqref{eq:char} we defined $\psi^{(t)}$ by the following composition of maps
\begin{align*}
 \begin{tabular}{ccccccc}
   $(\fh^-,m_\ast)$ & $\longrightarrow$ &  $(\bC[\![q]\!],\cdot)$  &$\longrightarrow$ & $(\bC[z^{-1},z]\!],\cdot)$ & $\longrightarrow$ & $(\bC[z^{-1},z]\!],\cdot)$   \\
   $y_{k_1}\cdots y_{k_n}$ & $\longmapsto$ &  $\oz_q^{(t)}(y_{k_1} \cdots y_{k_n})$  &$\longmapsto$ & $\oz_{e^z}^{(t)}(y_{k_1}\cdots y_{k_n})$ & $\longmapsto$ & $(-z)^{|\bk|}\oz_{e^z}^{(t)}(y_{k_1} \cdots y_{k_n})$, 
\end{tabular}
\end{align*}
where $k_1,\ldots,k_n\in \bZ_{<0}$. By Lemma \ref{lem:char} the first map is an algebra morphism and the substitution map $q\mapsto e^z$ preserves this property. Since the quasi-shuffle product $m_\ast$ preserves the weight, the multiplication map is an algebra morphism, too. Hence, $\psi^{(t)}$ is a character. Performing the composition of maps we explicitly obtain for $k_1,\ldots,k_n \in \bN$
\allowdisplaybreaks{
\begin{align*}
   \lefteqn{\oz_q^{(t)}(y_{-k_1} \cdots y_{-k_n}) 
 =  ~\sum_{m_1>\cdots>m_n>0} q^{k_1 m_1 t}(1-q^{m_1})^{k_1} \cdots q^{k_n m_n t}(1-q^{m_n})^{k_n}} \\
 = & ~\sum_{m_1>\cdots>m_n>0}\prod_{j=1}^n q^{k_jm_jt}\sum_{l_j=0}^{k_j} \binom{k_j}{l_j}(-1)^{l_j}q^{m_jl_j}\\
 = & ~\sum_{m_1,\ldots,m_n>0} \sum_{l_1=0}^{k_1} \cdots \sum_{l_n=0}^{k_n}\prod_{j=1}^n \binom{k_j}{l_j} (-1)^{l_j}q^{m_j(l_1+k_1t+\cdots+l_j+k_jt)}\\
 = & \sum_{l_1=0}^{k_1} \cdots \sum_{l_n=0}^{k_n}\prod_{j=1}^n \binom{k_j}{l_j} (-1)^{l_j+1} \frac{q^{l_1+k_1t+\cdots+l_j+k_jt}}{q^{l_1+k_1t+\cdots+l_j+k_jt}-1}.
\end{align*}} 
 Now we rewrite the last expression by substituting $e^z$ for $q$ such that 
\allowdisplaybreaks{
\begin{align*}  
 \lefteqn{\oz_q^{(t)}(y_{-k_1} \cdots y_{-k_n})= \sum_{l_1=0}^{k_1} \cdots \sum_{l_n=0}^{k_n}\prod_{j=1}^n \binom{k_j}{l_j} (-1)^{l_j+1} \frac{q^{l_1+k_1t+\cdots+l_j+k_jt}}{q^{l_1+k_1t+\cdots+l_j+k_jt}-1}}\\ 
 \stackrel{q\mapsto e^z}{\mapsto} & \sum_{m_1,\ldots, m_n\geq 0} \frac{B_{m_1}}{m_1!}\cdots \frac{B_{m_n}}{m_n!}\sum_{l_1=0}^{k_1} \cdots \sum_{l_n=0}^{k_n}\prod_{j=1}^n \binom{k_j}{l_j} (-1)^{l_j+1} (z(l_1+k_1t+\cdots+l_j+k_jt))^{m_j-1},
\end{align*}}
where we used \eqref{eq:Bernoulli}. Multiplying the last expression by $(-z)^{-|\bk|}$ yields
\allowdisplaybreaks{
\begin{align*}  
(-z)^{-|\bk|}\oz_{e^z}^{(t)}(y_{-k_1} \cdots y_{-k_n}) 
=
 \sum_{m_1,\ldots, m_n\geq 0} \frac{B_{m_1}}{m_1!}\cdots \frac{B_{m_n}}{m_n!}C^{k_1,\ldots,k_n}_{m_1,\ldots,m_n}(t) z^{m_1+\cdots+m_n-(k_1+\cdots+k_n)-n}, 
\end{align*}}
which concludes the proof. 
\end{proof}

\begin{remark}{\rm{
 The particular choice of the map $(-\log(q))^{|\bk|}\oz_q^{(t),\ast}(\bk)$ in the definition of the character $\psi^{(t)}$ is necessary in order to assure that the renormalised MZVs verify the meromorphic continuation. Indeed, for the -- at first glance -- more natural looking choice  $(1-q)^{|\bk|}\oz_q^{(t),\ast}(\bk)$ in the definition of the character $\psi^{(t)}$ we observe a contradiction to the meromorphic continuation, e.g., we would obtain $\zeta_+^{(t)}(-1)=-\frac{1}{12}\frac{t^2+t-1}{t^2+t}\neq -\frac{1}{12}$ for any $t\in D$. 
}}\end{remark}

\begin{lemma}\label{lem:meroconst}
 Let $k\in \bN$. Then 
 \allowdisplaybreaks{
 \begin{align*}
  C^k_m(t) = \begin{cases}
           \sum_{l=0}^k\binom{k}{l}\frac{(-1)^{l+k+1}}{l+kt} & m=0, \\
           ~0 & m=1,\ldots, k,\\
           -k! & m=k+1.
          \end{cases}
 \end{align*}}
\end{lemma}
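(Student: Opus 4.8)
The plan is to specialise the general formula for $C^{k_1,\ldots,k_n}_{m_1,\ldots,m_n}(t)$ from Lemma \ref{lem:defexp} to the single-letter case $n=1$, where it reads
\begin{align*}
 C^k_m(t)=\sum_{l=0}^k\binom{k}{l}(-1)^{l+k+1}(l+kt)^{m-1}.
\end{align*}
The $m=0$ case is then immediate: it is exactly the claimed alternating sum with $(l+kt)^{-1}$, so there is nothing to prove there beyond reading off the definition. The substance of the lemma lies in the remaining two cases, both of which concern the polynomial behaviour of the inner sum in the variable $l$. For $m\geq 1$ the exponent $m-1$ is a nonnegative integer, so $(l+kt)^{m-1}$ is a polynomial in $l$ of degree $m-1$, and the whole expression is a finite difference of that polynomial.

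For the vanishing statement ($m=1,\ldots,k$) and the evaluation at $m=k+1$, I would invoke the standard finite-difference identity
\begin{align*}
 \sum_{l=0}^k\binom{k}{l}(-1)^{l}P(l)=(-1)^k k!\,a_k
\end{align*}
valid for any polynomial $P(l)=a_k l^k+\cdots$ of degree at most $k$, where $a_k$ denotes the leading coefficient (and the sum vanishes whenever $\deg P<k$). Applying this with $P(l)=(l+kt)^{m-1}$: when $1\le m\le k$ the degree $m-1$ is strictly less than $k$, so the alternating sum vanishes, giving $C^k_m(t)=0$ after accounting for the overall factor $(-1)^{k+1}$. When $m=k+1$ the polynomial $(l+kt)^{k}$ has degree exactly $k$ with leading coefficient $1$, so the identity yields $\sum_{l=0}^k\binom{k}{l}(-1)^l(l+kt)^k=(-1)^k k!$, and multiplying by $(-1)^{k+1}$ produces $C^k_{k+1}(t)=-k!$. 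I would either cite the finite-difference identity directly or prove it in one line by observing that the operator $\sum_l\binom{k}{l}(-1)^{k-l}$ is the $k$-th forward difference $\Delta^k$, which annihilates polynomials of degree below $k$ and sends $l^k$ to $k!$.

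The only mild subtlety, and the point I expect to flag as the main obstacle, is the bookkeeping of the sign and the shift by $kt$: the argument is $l+kt$ rather than $l$, so one must be careful that the finite difference is taken in $l$ with $kt$ held as a constant shift. This is harmless because the forward difference is translation-invariant — $\Delta^k$ applied to $(l+c)^j$ equals $\Delta^k$ applied to $l^j$ evaluated at the shifted point, and in particular the leading-coefficient extraction is unaffected by the constant $c=kt$. Consequently the answers for $m=1,\ldots,k+1$ are genuinely independent of $t$, matching the statement, while the residual $t$-dependence survives only in the $m=0$ case where the exponent $m-1=-1$ is negative and the finite-difference identity does not apply. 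Since everything here is an exact polynomial identity in the finitely many quantities involved, no appeal to the analytic-continuation-in-$t$ argument from the start of the section is needed; the result holds verbatim for all $t$.
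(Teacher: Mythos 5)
Your proof is correct, and it takes a genuinely different (and in one respect cleaner) route than the paper. The paper handles the cases $m\geq 1$ by rewriting $C^k_m(t)$ as $(-1)^{k+kt+1}\left.(x\partial_x)^{m-1}\bigl((1+x)^k x^{kt}\bigr)\right|_{x=-1}$ and observing that the Euler operator can lower the power of $(1+x)$ by at most one per application, so a factor $(1+x)^{k-m+1}$ survives and kills the expression at $x=-1$ for $m\leq k$, while for $m=k+1$ only the term $k!\,x^{k+kt}$ survives. Because of the exponent $x^{kt}$ this manipulation only makes literal sense for $t\in\bN$, and the paper must then invoke the polynomial-identity argument announced at the start of Section \ref{sec:proofmain} to extend the conclusion to all $t\in D$. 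Your argument via the $k$-th forward difference $\Delta^k$ applied to the polynomial $(l+kt)^{m-1}$ in $l$ reaches the same conclusions — vanishing for degree $m-1<k$, leading-coefficient extraction $(-1)^k k!$ for $m=k+1$, and the sign bookkeeping with the prefactor $(-1)^{k+1}$ all check out — but does so as an exact polynomial identity in $t$ from the outset, so, as you correctly note, no continuation-in-$t$ step is needed. What the paper's formulation buys in exchange is that the same generating-function device generalises directly to the depth-two computation of $C^{k_1,k_2}_{1,k_1+k_2+1}(t)$ in Lemma \ref{lem:mero2}, where the operator $(x\partial_x)^{k_1+k_2}$ is applied to $(1+x)^{k_1+k_2}x^{(k_1+k_2)t}$; your finite-difference identity would need a two-variable (or collapsed single-variable) analogue there.
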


\begin{proof}
 In the case $m=0$ the claim is exactly Equation \eqref{eq:contC}. For $m\geq 1$ we observe
 \allowdisplaybreaks{
 \begin{align*}
  C_m^k(t) & =\left. (-1)^{k+kt+1} \sum_{l=0}^k \binom{k}{l} x^{l+kt}(l+kt)^{m-1}\right|_{x=-1} \\
  & = (-1)^{k+kt+1} \left.\left( x\partial_x \right)^{m-1} ((1+x)^kx^{kt}) \right|_{x=-1}\\
  & = \begin{cases}
        0 & m=1,\ldots, k,\\
       -k! & m=k+1, 
      \end{cases}
 \end{align*}}
which concludes the proof.
\end{proof}

\begin{lemma}\label{lem:mero1}
 For $k\in \bN$ we have
 \begin{align*}
  \psi^{(t)}_-(y_{-k})(z) = -C_0^k(t) z^{-k-1} \hspace{0.5cm} \text{~and~} \hspace{0.5cm} \psi^{(t)}_+(y_{-k})(z)  = -\frac{B_{k+1}}{k+1} + O_t(z),
 \end{align*}
where $O_t(z)$ is the standard Landau notation in which the subindex $t$ denotes the $t$-dependence of the coefficients of higher order terms.
\end{lemma}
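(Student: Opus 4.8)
The plan is to exploit that a single letter $y_{-k}$ is a primitive element of the Hopf algebra $(\fh^-,m_\ast,\Delta)$: the deconcatenation coproduct gives $\Delta(y_{-k}) = \be\otimes y_{-k} + y_{-k}\otimes\be$, so the reduced coproduct $\Delta'(y_{-k})$ vanishes. Consequently the recursive formulas \eqref{eq:psim} and \eqref{eq:psip} lose their convolution correction term and collapse to
\[
 \psi_-^{(t)}(y_{-k}) = -\pi\bigl(\psi^{(t)}(y_{-k})\bigr), \qquad
 \psi_+^{(t)}(y_{-k}) = (\Id-\pi)\bigl(\psi^{(t)}(y_{-k})\bigr).
\]
Thus everything reduces to reading off the polar and the non-polar part of the Laurent series $\psi^{(t)}(y_{-k})(z)$.

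Next I would specialise Lemma \ref{lem:defexp} to depth $n=1$, obtaining
\[
 \psi^{(t)}(y_{-k})(z) = \sum_{m\geq 0}\frac{B_m}{m!}\,C^k_m(t)\,z^{m-k-1},
\]
and then substitute the explicit values of $C^k_m(t)$ furnished by Lemma \ref{lem:meroconst}. The power of $z$ attached to the $m$-th term is $m-k-1$, which is strictly negative exactly for $m=0,1,\dots,k$ and vanishes for $m=k+1$. The crucial point is that Lemma \ref{lem:meroconst} makes $C^k_m(t)=0$ for all the intermediate indices $m=1,\dots,k$, so among the potentially polar terms only $m=0$ survives, contributing $B_0\,C^k_0(t)\,z^{-k-1} = C^k_0(t)\,z^{-k-1}$ (using $B_0=1$).

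Applying $\pi$, which retains precisely the strictly negative powers of $z$, then yields $\pi\bigl(\psi^{(t)}(y_{-k})\bigr) = C^k_0(t)\,z^{-k-1}$ and hence $\psi_-^{(t)}(y_{-k})(z) = -C^k_0(t)\,z^{-k-1}$, the first assertion. For the second, $(\Id-\pi)$ keeps the part with non-negative powers of $z$; its constant term is the $m=k+1$ contribution,
\[
 \frac{B_{k+1}}{(k+1)!}\,C^k_{k+1}(t) = \frac{B_{k+1}}{(k+1)!}\,(-k!) = -\frac{B_{k+1}}{k+1},
\]
while all terms with $m\geq k+2$ carry strictly positive powers of $z$ and are absorbed into $O_t(z)$. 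This gives $\psi_+^{(t)}(y_{-k})(z) = -\frac{B_{k+1}}{k+1} + O_t(z)$.

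I do not expect a genuine obstacle here: once primitivity trivialises the Birkhoff recursion, the statement is pure bookkeeping of powers of $z$. The only place where care is required is matching the exponent $m-k-1$ against the vanishing pattern of $C^k_m(t)$; this is exactly what Lemma \ref{lem:meroconst} is designed to control, and it is precisely this vanishing that guarantees the finite part equals the meromorphic-continuation value $-B_{k+1}/(k+1) = \z(-k)$ of \eqref{eq:mero1}.
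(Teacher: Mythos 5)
Your proposal is correct and follows essentially the same route as the paper: invoke primitivity of $y_{-k}$ to reduce \eqref{eq:psim} and \eqref{eq:psip} to $-\pi$ and $\Id-\pi$ applied to $\psi^{(t)}(y_{-k})$, then read off the Laurent coefficients from Lemma \ref{lem:defexp} using the vanishing pattern of $C^k_m(t)$ in Lemma \ref{lem:meroconst}. No further comment is needed.
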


\begin{proof}
 Since $y_{-k}$ is primitive in the Hopf algebra $(\fh^-,m_\ast,\Delta)$, Equations \eqref{eq:psim} and \eqref{eq:psip} imply respectively  
  \begin{align*}
   \psi^{(t)}_-(y_{-k})(z) = -\pi \psi^{(t)}(y_{-k})(z) \hspace{0.5cm}\text{~and~}\hspace{0.5cm} \psi^{(t)}_+(y_{-k})(z) = (\Id-\pi)\psi^{(t)}(y_{-k})(z).
  \end{align*}
From Lemma \ref{lem:meroconst} we deduce
\begin{align*}
 \psi^{(t)}_-(y_{-k})(z) &= -\pi\left( \sum_{m\geq 0}\frac{B_m}{m!}C^k_m(t) z^{m-k-1} \right) = -\sum_{m= 0}^k\frac{B_m}{m!}C^k_m(t) z^{m-k-1} = -C_0^k(t) z^{-k-1}
\end{align*}
and 
\begin{align*}
 \psi^{(t)}_+(y_{-k})(z) &= (\Id-\pi)\left(\sum_{m\geq 0}\frac{B_m}{m!}C^k_m(t) z^{m-k-1} \right) = \sum_{m\geq k+1}\frac{B_m}{m!}C^k_m(t) z^{m-k-1} 
  =- \frac{B_{k+1}}{k+1} + O_t(z). 
\end{align*}
\end{proof}

\begin{lemma}\label{lem:mero2}
 For $k_1,k_2 \in \bN$ with $k_1+k_2$ odd we have
 \begin{align*}
  \psi_+^{(t)}(y_{-k_1}y_{-k_2})(z) = \frac{1}{2} \frac{B_{k_1 + k_2 + 1}}{k_1 + k_2 + 1} + O_t(z).
 \end{align*}
\end{lemma}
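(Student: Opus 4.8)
The plan is to feed $w=y_{-k_1}y_{-k_2}$ into the Birkhoff recursion \eqref{eq:psip} and read off the coefficient of $z^0$, since $\psi_+^{(t)}$ takes values in $\cA_+=\bC[\![z]\!]$ and the asserted value is $\psi_+^{(t)}(y_{-k_1}y_{-k_2})(0)$. The reduced coproduct is $\Delta'(y_{-k_1}y_{-k_2})=y_{-k_1}\otimes y_{-k_2}$, so \eqref{eq:psip} reads
\[
\psi_+^{(t)}(y_{-k_1}y_{-k_2})=(\Id-\pi)\left(\psi^{(t)}(y_{-k_1}y_{-k_2})+\psi_-^{(t)}(y_{-k_1})\,\psi^{(t)}(y_{-k_2})\right).
\]
First I would discard the cross term. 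By Lemma \ref{lem:mero1}, $\psi_-^{(t)}(y_{-k_1})=-C_0^{k_1}(t)z^{-k_1-1}$, while $\psi^{(t)}(y_{-k_2})=\sum_{m\geq0}\frac{B_m}{m!}C_m^{k_2}(t)z^{m-k_2-1}$ by Lemma \ref{lem:defexp}; matching powers, the $z^0$-coefficient of their product is proportional to $B_{k_1+k_2+2}$. Since $k_1+k_2$ is odd and $k_1,k_2\geq1$, we have $k_1+k_2+2\geq 5$ odd, hence $B_{k_1+k_2+2}=0$ and the cross term does not contribute. Thus the constant term of $\psi_+^{(t)}(y_{-k_1}y_{-k_2})$ equals that of $\psi^{(t)}(y_{-k_1}y_{-k_2})=(-z)^{-(k_1+k_2)}\oz_{e^z}^{(t)}(y_{-k_1}y_{-k_2})$, i.e.\ it is $(-1)^{k_1+k_2}$ times the coefficient of $z^{k_1+k_2}$ in $G(z):=\oz_{e^z}^{(t)}(y_{-k_1}y_{-k_2})$.

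The heart of the argument is a reflection identity for $G$. Setting $g(a,z):=\frac{e^{az}}{e^{az}-1}$, the computation in the proof of Lemma \ref{lem:defexp} gives
\[
G(z)=\sum_{l_1=0}^{k_1}\sum_{l_2=0}^{k_2}\binom{k_1}{l_1}\binom{k_2}{l_2}(-1)^{l_1+l_2}\,g(a_1,z)\,g(a_2,z),\qquad a_1=l_1+k_1t,\quad a_2=l_1+l_2+(k_1+k_2)t.
\]
The elementary fact I would exploit is $g(a,-z)=1-g(a,z)$. Substituting this, expanding $(1-g(a_1,z))(1-g(a_2,z))$, and using $\sum_l\binom{k}{l}(-1)^l=0$ for $k\geq1$, the ``$1$'' and ``$g(a_1,z)$'' contributions vanish, while the ``$g(a_2,z)$'' contribution collapses by Vandermonde's identity $\sum_{l_1+l_2=l}\binom{k_1}{l_1}\binom{k_2}{l_2}=\binom{k_1+k_2}{l}$ to the depth-one model. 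This yields the Laurent-series identity
\[
G(-z)-G(z)=\oz_{e^z}^{(t)}(y_{-(k_1+k_2)}).
\]

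Finally I would extract the coefficient of $z^N$ with $N:=k_1+k_2$ odd. Since $[z^N]G(-z)=(-1)^N[z^N]G(z)=-[z^N]G(z)$, the left side contributes $-2\,[z^N]G(z)$; on the right, Lemma \ref{lem:meroconst} (the case $C_{N+1}^N(t)=-N!$) gives $[z^N]\oz_{e^z}^{(t)}(y_{-N})=(-1)^{N+1}\frac{B_{N+1}}{N+1}=\frac{B_{N+1}}{N+1}$. Hence $[z^N]G(z)=-\tfrac12\frac{B_{N+1}}{N+1}$, and multiplying by $(-1)^N=-1$ gives the constant term $\tfrac12\frac{B_{k_1+k_2+1}}{k_1+k_2+1}$, as claimed. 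The main obstacle is locating the reflection identity: once one recognises $g(a,-z)=1-g(a,z)$ and that the nested structure lets the coupled exponent $a_2=l_1+l_2+(k_1+k_2)t$ be Vandermonde-collapsed to the single-argument model, the remaining sign and coefficient bookkeeping is routine. As a consistency check, the computation shows $\psi_+^{(t)}(y_{-k_1}y_{-k_2})(0)$ depends only on $k_1+k_2$, so it is symmetric in $k_1,k_2$; this matches the quasi-shuffle relation $\z_+^{(t)}(-k_1)\z_+^{(t)}(-k_2)=\z_+^{(t)}(-k_1,-k_2)+\z_+^{(t)}(-k_2,-k_1)+\z_+^{(t)}(-k_1-k_2)$, whose left-hand side vanishes for odd $k_1+k_2$ (as $B_{k_j+1}=0$ for the even $k_j$).
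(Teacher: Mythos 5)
Your proof is correct, and the two flanking steps (reading off the reduced coproduct $\Delta'(y_{-k_1}y_{-k_2})=y_{-k_1}\otimes y_{-k_2}$ and killing the counterterm $\psi_-^{(t)}(y_{-k_1})\psi^{(t)}(y_{-k_2})$ at order $z^0$ via $B_{k_1+k_2+2}=0$) are exactly what the paper does. The central computation, however, is genuinely different. The paper attacks the constant term of $\psi^{(t)}(y_{-k_1}y_{-k_2})$ head-on: Bernoulli parity reduces the sum over $m_1+m_2=k_1+k_2+2$ to the two pairs $(1,k_1+k_2+1)$ and $(k_1+k_2+1,1)$, then $C^{k_1,k_2}_{k_1+k_2+1,1}(t)=0$ by the alternating binomial sum in $l_2$, and $C^{k_1,k_2}_{1,k_1+k_2+1}(t)=(k_1+k_2)!$ by the $\left.(x\partial_x)^{k_1+k_2}\bigl((1+x)^{k_1+k_2}x^{(k_1+k_2)t}\bigr)\right|_{x=-1}$ trick. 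You instead establish the reflection identity $G(-z)-G(z)=\oz_{e^z}^{(t)}(y_{-(k_1+k_2)})$ from $g(a,-z)=1-g(a,z)$ together with Vandermonde (which works precisely because the second exponent $l_1+l_2+(k_1+k_2)t$ depends on $l_1,l_2$ only through their sum), and then extract the odd-degree coefficient; I checked the signs and they come out right, including $[z^N]\oz_{e^z}^{(t)}(y_{-N})=(-1)^{N+1}B_{N+1}/(N+1)$ from $C^N_{N+1}(t)=-N!$. Your route buys more than is needed for the lemma: the reflection identity pins down the entire odd part of the depth-two Laurent series in terms of the depth-one one (a $q$-regularised depth-two parity theorem), and it bypasses the case analysis on which Bernoulli numbers vanish in the double sum; the paper's route is more pedestrian but requires only the coefficient formula of Lemma \ref{lem:defexp} and no functional equation. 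Your closing consistency check against the quasi-shuffle relation is sound but, as you note, not part of the argument.
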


\begin{proof}
First we remark that 
 \begin{align*}
  \Delta(y_{-k_1}y_{-k_2}) = \be \otimes y_{-k_1}y_{-k_2} +y_{-k_1}\otimes y_{-k_2} + y_{-k_1}y_{-k_2} \otimes \be 
 \end{align*}
 and therefore Equation \eqref{eq:psip} implies
 \begin{align*}
  \psi^{(t)}_+(y_{-k_1}y_{-k_2}) = (\Id-\pi)\left( \psi^{(t)}(y_{-k_1}y_{-k_2})+ \psi^{(t)}_-(y_{-k_1})\psi^{(t)}(y_{-k_2}) \right).
 \end{align*}
Since $k_1+k_2+2$ is odd and $B_s=0$ for odd $s\geq 2$ we obtain with $B_1=\frac{1}{2}$
 \begin{align*}
   (\Id-\pi)\psi^{(t)}(y_{-k_1}y_{-k_2})(z) &=  \sum_{m_1+m_2=k_1+k_2+2} \frac{B_{m_1}}{m_1!}\frac{B_{m_2}}{m_2!} C_{m_1,m_1}^{k_1,k_2}(t) + O_t(z) \\
   & = \frac{1}{2}\frac{B_{k_1+k_2+1}}{(k_1+k_2+1)!} \left( C^{k_1,k_2}_{k_1+k_2+1,1}(t)+C^{k_1,k_2}_{1,k_1+k_2+1}(t) \right) +O_t(z). 
 \end{align*}
On the one hand $\sum_{l_2=0}^{k_2}\binom{k_2}{l_2}(-1)^{l_2}=0$ and therefore  
\begin{align*}
 C_{k_1+k_2+1,1}^{k_1+k_2}(t)=\sum_{l_1=0}^{k_1} \sum_{l_2=0}^{k_2}\binom{k_1}{l_1} \binom{k_2}{l_2}(-1)^{l_1+l_2+k_1+k_2}(l_1+k_1t)^{k_1+k_2}=0. 
\end{align*}
On the other hand we get
\allowdisplaybreaks{
\begin{align*}
   & C_{1,k_1+k_2+1}^{k_1+k_2}(t)\\
 = &~ \left.(-1)^{k_1+k_2+(k_1+k_2)t} \sum_{l_1=0}^{k_1} \sum_{l_2=0}^{k_2} \binom{k_1}{l_1} \binom{k_2}{l_2} x^{l_1+l_2+(k_1+k_2)t} (l_1+l_2+(k_1+k_2)t)^{k_1+k_2}\right|_{x=-1} \\
 = &~ (-1)^{k_1+k_2+(k_1+k_2)t} \left.\left( x\partial_x\right)^{k_1+k_2}\sum_{l_1=0}^{k_1} \sum_{l_2=0}^{k_2} \binom{k_1}{l_1} \binom{k_2}{l_2} x^{l_1+l_2+(k_1+k_2)t} \right|_{x=-1}\\
 = &~ (-1)^{k_1+k_2+(k_1+k_2)t} \left. \left( x \partial_x\right)^{k_1+k_2}\left((1+x)^{k_1+k_2}x^{(k_1+k_2)t} \right)\right|_{x=-1}\\
 =&~ (k_1+k_2)!
\end{align*}}
and therefore 
\begin{align*}
 (\Id-\pi)\psi^{(t)}(y_{-k_1}y_{-k_2})(z) = \frac{1}{2}\frac{B_{k_1+k_2+1}}{k_1+k_2+1} +O_t(z).
\end{align*}
Moreover, since $B_{k_1+k_2+2}=0$ we observe with Lemma \ref{lem:defexp} and \ref{lem:mero1}
\begin{align*}
   (\Id-\pi)\left( \psi_-^{(t)}(y_{-k_1})(z)\psi^{(t)}(y_{-k_2})(z)\right)
   & = -(\Id-\pi)\left( C^{k_1}_0(t) z^{-k_1-1} \cdot \sum_{m\geq 0} \frac{B_m}{m!}C_m^{k_2}(t)z^{m-k_2-1}\right)\\
   &=O_t(z). 
\end{align*}

To summarise, we have shown that 
\begin{align*}
 \psi_+^{(t)}(y_{-k_1}y_{-k_2})(z) &=(\Id-\pi)\left( \psi^{(t)}(y_{-k_1}y_{-k_2})+ \psi^{(t)}_-(y_{-k_1})\psi^{(t)}(y_{-k_2}) \right)(z)\\
 &= \frac{1}{2}\frac{B_{k_1+k_2+1}}{k_1+k_2+1} + O_t(z), 
\end{align*}
which concludes the proof.
\end{proof}

\begin{proof}[Proof of Theorem \ref{theo:main}]
 From Lemma \ref{lem:mero1} we deduce for $k\in \bN$
 \begin{align*}
  \z_+^{(t)}(-k) = \lim_{z\to 0}\psi_+^{(t)}(y_{-k})(z) = -\frac{B_{k+1}}{k+1},
  \end{align*}
 which coincides with \eqref{eq:mero1} and for $k_1,k_2\in \bN$ with $k_1+k_2$ odd we observe from Lemma \ref{lem:mero2} that
   \begin{align*}
  \z_+^{(t)}(-k_1,-k_1) = \lim_{z\to 0}\psi_+^{(t)}(y_{-k_1}y_{-k_2})(z) = \frac{1}{2} \frac{B_{k_1+k_2+1}}{k_1+k_2+1},
  \end{align*}
which is consistent with \eqref{eq:mero2}. For length greater than three the meromorphic continuation does not provide any information for negative integer arguments since $(\bZ_{<0})^n\subseteq \cS_n$ for $n \geq 3$ (see \eqref{eq:meroMZV}). Therefore a) is established. Next we prove b). As shown in Lemma \ref{lem:defexp} $\psi^{(t)}$ is an algebra morphism with respect to the quasi-shuffle product $m_\ast$. Therefore Theorem \ref{theo:ConKre} implies that $\psi_+^{(t)}$ is also an algebra morphism and hence is $\z_+^{(t)}$. For c) we remark that Equation \eqref{eq:contC} of Lemma \ref{lem:defexp} shows that $\psi^{(t)}$ is a meromorphic function in $z$, whose coefficients are rational functions in $t$ over $\bQ$. Equations \eqref{eq:psim} and \eqref{eq:psip} indicate that the power series $\psi_+^{(t)}$ is obtained from $\psi^{(t)}$ by subtractions and projections. Therefore $\z_+^{(t)}$ is -- as the constant coefficient of the power series $\psi_+^{(t)}$ -- a rational function in $t$ over $\bQ$. 
\end{proof}

%%%%%%%%%%%%%%%%%%%%%%%%%%%%%%%%

\section{Numerical examples}
\label{sec:num}

In this subsection we provide some explicit numerical examples of the renormalisation process introduced in the previous sections. First we provide an explicit example of how to compute renormalised MZVs. 
\begin{example}
 Let us calculate the renormalised MZV $\z_+^{(1)}(-1,-3)$. Using \eqref{eq:coproduct} we observe 
 \begin{align*}
  \Delta(y_{-1}y_{-3}) = \be \otimes y_{-1}y_{-3}+ y_{-1}\otimes y_{-3}+ y_{-1}y_{-3}\otimes \be.
 \end{align*}
Consequently, formula \eqref{eq:psip} implies 
\begin{align*}
 \psi_+^{(1)}(z)=(\Id-\pi) \left(\psi^{(1)}(y_{-1}y_{-3})(z)+\psi_{-}^{(1)}(y_{-1})(z)\psi^{(1)}(y_{-3})(z) \right).
\end{align*}
Using 
\begin{align*}
  \psi^{(1)}(y_{-1})(z)&= \frac{1}{2}{z}^{-2}-\frac{1}{12}+\frac{7}{720}{z}^{2}-
\frac{31}{30240}{z}^{4}+ O({z}^{5}), \\
  \psi^{(1)}(y_{-3})(z)& = \frac{1}{60}{z}^{-4}+\frac{1}{120}-\frac{41}{1008}{z}^{2}+
  \frac{2203}{28800}{z}^{4}+O({z}^{5}),\\
  \psi^{(1)}(y_{-1}y_{-3})(z) &=  \frac{3}{560}{z}^{-6}+\frac{1}{560}{z}^{-5}+\frac{47}{11200}{z}^{-2}-\frac{5377}{282240}+\frac{1}{84}z+O({z}^{2}), 
\end{align*}
we obtain 
\begin{align*}
 (\Id-\pi)\psi^{(1)}(y_{-1}y_{-3})(z) = -\frac{5377}{282240}+\frac{1}{84}z+O({z}^{2}). 
\end{align*}
Since $y_{-1}$ is a primitive element of the Hopf algebra $(\fh^{-}, m_\ast,\Delta)$ we have $\psi_-^{(1)}(y_{-1})(z)= -\frac{1}{2}{z}^{-2}$ by Equation \eqref{eq:psim} and therefore 
\begin{align*}
 (\Id-\pi) \left(\psi_{-}^{(1)}(y_{-1})(z)\psi^{(1)}(y_{-3})(z) \right) = \frac{41}{2016} +O(z^2).
\end{align*}
All in all we get  
\begin{align*}
  \psi_+^{(1)}(z)=(\Id-\pi) \left(\psi^{(1)}(y_{-1}y_{-3})(z)+\psi_{-}^{(1)}(y_{-1})(z)\psi^{(1)}(y_{-3})(z) \right) = \frac{121}{94080}+\frac{1}{84}z+O(z^2),
\end{align*}
which results in 
\begin{align*}
 \z_+^{(1)}(-1,-3)=\lim_{z\to 0} \psi_+^{(1)}(z) =  \frac{121}{94080}. 
\end{align*}
\end{example}
In Table \ref{table1} we list the renormalised MZVs in the case $t=1$ for depth two. For depth one the renormalised MZVs are always rational as well as for $k_1,k_2\in \bN$ with $k_1+k_2$ odd, due to Theorem \ref{theo:main} a). Because of the quasi-shuffle relation 
\begin{align*}
 \z_+^{(t)}(-k)\z_+^{(t)}(-k) = 2 \z_+^{(t)}(-k,-k) + \z_+^{(t)}(-2k)
\end{align*}
for $k\in \bN$ the diagonal entries $\z_+^{(t)}(-k,-k)$ are also always rational and do not depend on the parameter $t$. The first case for which we obtain a non-constant rational function in $t$ over $\bQ$ is the case $\z_+^{(t)}(-1,-3)$. The quasi-shuffle product implies
\begin{align*}
  \z_+^{(t)}(-1)\z_+^{(t)}(-3) =  \z_+^{(t)}(-1,-3)+\z_+^{(t)}(-3,-1) + \z_+^{(t)}(-4). 
\end{align*}
Therefore a priori $\z_+^{(t)}(-1,-3)$ and $\z_+^{(t)}(-3,-1)$ are not explicitly given by that relation. They only have to satisfy $ \z_+^{(t)}(-1,-3)+\z_+^{(t)}(-3,-1) = \z_+^{(t)}(-1)\z_+^{(t)}(-3)$. Using  \eqref{eq:psim} and \eqref{eq:psip} we find 
\allowdisplaybreaks{
  \begin{align*}
   \z_+^{(t)}(-1,-3) & = \frac{1}{8064}\frac{166t^2+166t+31}{(4t+3)(4t+1)}, \\
   \z_+^{(t)}(-3,-1) & = -\frac{1}{40320}\frac{1278t^2+1278t+239}{(4t+3)(4t+1)}. 
  \end{align*}}

This example also proves the second claim of Corollary \ref{coro:main}. Indeed, since not all renormalised MZVs $\z_+^{(t)}$ are constant as a rational function in $t$ over $\bQ$ the choice of a transcendental $t\in D$ leads to irrational values for $\z_+^{(t)}$. It is easily seen that also complex numbers can appear as renormalised MZVs.

{\tiny{
\begin{center}
\begin{table}[]
\renewcommand{\arraystretch}{2}
\begin{tabular}{r|c|c|c|c|c|c}
  {$k_1 \diagdown k_2$}& $-1$ & $-2$ & $-3$ & $-4$ & $-5$ & $-6$  \\
  \hline 
  $-1$ & $ \frac{1}{288}$ & $-\frac{1}{240} $ & $ \frac{121}{94080} $ & $\frac{1}{504}$ & $-\frac{31093}{17740800}$ & $-\frac{1}{480} $  \\
  \hline
  $-2$ & $-\frac{1}{240} $ & $0$ & $\frac{1}{504} $ & $-\frac{48529}{66528000} $ &$-\frac{1}{480}$ & $\frac{131679179}{71922090240}$  \\
  \hline
  $-3$ & $-\frac{559}{282240} $ & $\frac{1}{504} $ & $ \frac{1}{28800} $ & $ -\frac{1}{480} $ & $\frac{941347763}{1150753443840} $ & $\frac{1}{264}$ \\
  \hline
  $-4$ & $\frac{1}{504} $ & $\frac{48529}{66528000}  $ & $-\frac{1}{480} $ & $ 0$ &  $\frac{1}{264}$& $-\frac{199275989809861}{128121575662080000}$  \\
  \hline
  $-5$ & $ \frac{110879}{53222400} $ & $-\frac{1}{480}$ & $-\frac{979401779}{1150753443840}$ & $\frac{1}{264} $ & $\frac{1}{127008}$ & $-\frac{691}{65520}$  \\
  \hline
  $-6$ & $ -\frac{1}{480} $ & $-\frac{131679179}{71922090240}  $ & $\frac{1}{264}  $ & $\frac{199275989809861}{128121575662080000} $ & $-\frac{691}{65520}$ & $0$  \\
\end{tabular}
\caption{The renormalised MZVs $\zeta^{(1)}_+(k_1,k_2)$.}\label{table1}
 \end{table}
\end{center}}}

%%%%%%%%%%%%%%%%%%%%%%%%%%%%%%%%

\subsection*{Conclusion}

Extensions of MZVs to strictly negative arguments by renormalisation, i.e., by using subtraction methods on properly regularised nested sums, implied by a Hopf algebraic Birkhoff decomposition, were introduced in \cite{Guo08} and \cite{Manchon10}. 
The two approaches differ in the regularisations used to obtain formally well-defined expressions. This results in different values associated to certain MZVs at negative arguments, while both approaches preserve the quasi-shuffle product. In this paper, we have shown how to renormalise MZVs using minimal subtraction in an intrinsically regularised $q$-analogue of MZVs. 
The subtraction method is implied by a Hopf algebraic Birkhoff decomposition, and the quasi-shuffle product is preserved. 

Regarding the parameter $t$ of $\z_+^{(t)}$, it is natural to ask for the relation between different sets of renormalised MZVs corresponding to different parameters. Let $t_1,t_2 \in D$. Recent work shows that we have $\z_+^{(t_1)} = \alpha_{t_1,t_2} \star \z_+^{(t_2)}$, where $\alpha_{t_1,t_2}$ belongs to the \emph{renormalisation group of MZVs} \cite{Ebrahimi15c}, which is a particular subgroup of the group of characters of the quasi-shuffle Hopf algebra. Similarly one can compare the renormalised MZVs in this work with those values obtained in the literature \cite{Guo08,Manchon10}. The reader is referred to \cite{Ebrahimi15c} for more details.

Moreover, in the light of the results  in \cite{Ebrahimi15} one may wonder whether the approach presented here can be further developed by characterising different $q$-analogues of MZVs as distinct $q$-regularisation methods for MZVs. A possible prescription would be to start by replacing the summation variables in the nested series \eqref{eq:MZV} by $q$-integers, and then to introduce in the nominator a particular polynomial in the $q$-regularisation parameter. The choice of the latter is crucial since it determines the algebraic as well as analytic properties of the resulting $q$-MZVs. Several $q$-analogues of MZVs have been described in the literature, and it would be rather interesting to understand them from the point of view of extending MZVs to negative arguments.

\bibliographystyle{plain}
\bibliography{library}

\end{document}